\newtheoremstyle{theorem}%name
  {12pt}          % space above
  {12pt}  % space below
  {\sl}  % bofy font
  {\parindent}     % ident - empty=no indent,  \parindent= paragraph indent
  {\bf}  % thm head font
  {. }    % punctuation after thm head
  { }    % space after thm head: `` ``=normal \newline=linebreak
  {}     % thm head specification
\theoremstyle{theorem}
\newtheorem{theorem}{Theorem}
\newtheorem{corollary}[theorem]{Corollary}
\newtheorem{remark}[theorem]{Remark}
\newtheorem{proposition}[theorem]{Proposition}
\newtheorem{lemma}[theorem]{Lemma}
\newtheorem{definition}[theorem]{Definition}
\newcommand{\ic}{\ensuremath{\mathcal{I}}}
\newcommand{\gc}{\ensuremath{\mathcal{G}}}
\newcommand{\oc}{\ensuremath{\mathcal{O}}}
\newcommand{\ec}{\ensuremath{\mathcal{E}}}
\newcommand{\Ptw}{\mathbb{P}^2}
\newcommand{\Pun}{\mathbb{P}^1}
\newcommand{\Pn}{\mathbb{P}^n}
\newcommand{\aG}{\alpha}
\newcommand{\bG}{\beta}
\newcommand{\dG}{\delta}
\newcommand{\bds}{\begin{displaystyle}}
\newcommand{\eds}{\end{displaystyle}}
\begin{document}
\title[Q.c.i. in $\Ptw$ and syzygies]{Quasi-complete intersections in $\Ptw$ and syzygies.}

\author{Ph. Ellia}
\address{Dipartimento di Matematica e Informatica, Universit\`a degli Studi di Ferrara, Via Machiavelli 30, 44121 Ferrara, Italy.}
\email{phe@unife.it}

\subjclass[2010] {Primary 14H50; Secondary 14M06, 14M07, 13D02} \keywords{quasi complete intersections, vector bundle, syzygies, global Tjurina number, plane curves.}

\begin{abstract} Let $C \subset \Ptw$ be a reduced, singular curve of degree $d$ and equation $f=0$. Let $\Sigma$ denote the jacobian subscheme of $C$. We have $0 \to E \to 3.\oc \to \ic _\Sigma (d-1) \to 0$ (the surjection is given by the partials of $f$). We study the relationships between the Betti numbers of the module $H^0_*(E)$ and the integers, $d, \tau$, where $\tau = \deg (\Sigma)$. We observe that our results apply to any quasi-complete intersection of type $(s,s,s)$.
\end{abstract}

\date{\today}

%\begin{document}
%%%%%%%%%%%%%%%%%%%%%%%%%%%%%%%%%%%%%%%%%%%%%%%%%%%%%%%%%
\maketitle

%\tableofcontents

\thispagestyle{empty}

\section{Introduction.}
Let $C \subset \Ptw$ be a reduced, singular curve, of degree $d$, of equation $f=0$. The partials of $f$ determine a morphism: $3.\oc \stackrel{\partial f}{\to} \oc (d-1)$, whose image is $\ic _\Sigma (d-1)$, where according to our assumptions, $\Sigma \subset \Ptw$, is a closed subscheme of codimension two. The subscheme $\Sigma$, whose support is the singular locus of $C$, is called the \emph{jacobian subscheme} of $C$. We denote by $\tau$ its degree, it is the \emph{global Tjurina number} of the plane curve $C$.

We have:
\begin{equation}
\label{eq:C}
0 \to E \to 3.\oc \to \ic _\Sigma (d-1) \to 0
\end{equation}

where $E$ is a rank two vector bundle with Chern classes $c_1 = 1-d, c_2 = (d-1)^2 - \tau$ (see for instance \cite{E-Tju} and references therein). The bundle $E$ is the sheaf of logarithmic vector fields along $C$, also denoted $Der(-\log C)$. (\cite{Saito}, \cite{Sernesi}, \cite{DSe}). A particular case of this situation is when $C$ is an \emph{arrangement of lines} (\cite{DolgK}, \cite{Yosh}). This is a very active field of research with a huge literature.  

In (\cite{du}), using techniques of the theory of singularities, du Plessis and Wall gave sharp bounds on $\tau$ in function of $d$ and $d_1$, the least twist of $E$ having a section. Observe that $H^0_*(E)$ is the module of syzygies between the partials. This result has been extended (see \cite{E-Tju}) to the case of quasi-complete intersections (q.c.i.), using vector bundles techniques.

In this note, inspired by \cite{DS}, instead of considering only $d_1$, the minimal degree of a generator of $H^0_*(E)$, we consider the full minimal resolution of this module. So we will assume that $H^0_*(E)$ is minimally generated by $m$ elements of degree $d_1 \leq d_2 \leq \cdots \leq d_m$. The $m$-uple $(d_1, \cdots, d_m)$ is the \emph{exponent} of $C$. We have $m \geq 2$, with equality if and only if $E$ splits. In this case one say that $C$ is a \emph{free divisor} (\cite{Saito}, \cite{Artal}) or, equivalently, that $\Sigma$ is an almost complete intersection. The case $m=3$ is handled in \cite{DS}. Here we deal with the general case $m \geq 3$.

Starting from the minimal free resolution of $H^0_*(E)$ we show how to get a free (non necessarily minimal) resolution of $\ic _\Sigma$. With this we show (Corollary \ref{C-Sigma ic}) that if $\Sigma$ is a complete intersection, then $m \leq 4$. Then (Theorem \ref{C-2d-4}) we prove that  $2d -4 \geq d_i, \forall i$ and that the inequality is sharp if and only if $\Sigma$ is a point ($\tau =1$). Finally we prove: $d_m= d-1$ or $2d -m \geq d_m$. 

Then Theorem \ref{P-d3<d}), shows that $d_3 \leq d-1$ and characterizes the q.c.i. realizing the lower bound, $(d-1)(d-1-d_1)=\tau$, in du Plessis-Wall theorem: this happens if and only if $\Sigma$ is a complete intersection $(d-1, d-1-d_1)$. We also describe what happens in the next degree.

Finally, in the setting of q.c.i., we answer to a conjecture raised in \cite{DS2} (Proposition \ref{P-tau+}) and describe the sub-maximal case (see Proposition \ref{P-lin moins un}).

The exact sequence (\ref{eq:C}) presents $\Sigma$ as a quasi-complete intersections (q.c.i.) of type $(d-1, d-1, d-1)$. In our proofs we will \underline{\emph{never}} use the fact that the three curves giving the q.c.i. are the partials of a polynomial $f$ (!). \emph{So setting $s=d-1$, all our results are true for q.c.i. of type $(s,s,s)$}. Actually, after appropriate changes in notations (see \cite{E-Tju}) they should hold for all q.c.i. (i.e. of any type $(a,b,c)$). Observe that to determine the minimal free resolution (m.f.r.) of $H^0_*(E)$ amounts to determine the m.f.r. of the (non saturated if $m>2$) q.c.i. ideal $J = (F_1, F_2, F_3)$. For a purely algebraic approach to q.c.i. see for example \cite{Simis-Toh}.

As the first version of this paper was finished I received the preprint \cite{DS2} containing some overlaps. This obliged me to revisit my text. This version contains some improvements (so thank you to the authors of \cite{DS2} !), but overlaps are still present. However, since the methods are different, it could be useful to see how geometric techniques apply in this context.   

I thank Alexandru Dimca for useful discussions, in particular about (i) of Theorem \ref{P-d3<d}.

\section{Setting, notations.}

Following \cite{DS} we have:

\begin{definition}
\label{D-msyz curve}
We will say that $C$ is a $m$-syzygy curve if $H^0_*(E)$ is minimally generated by $m$ elements of degree $d_1 \leq d_2 \leq \cdots \leq d_m$. The $m$-uple $(d_1, \cdots, d_m)$ is the exponent of $C$.
\end{definition}

\begin{remark}
\label{R-def msyz}
We have $m \geq 2$. Moreover $m=2$ if and only if $E$ is the direct sum of two line bundles.

\emph{In the sequel we will always assume $m \geq 3$}.

For any $i$, $E(d_i)$ has a section vanishing in codimension two.
\end{remark}

Besides the exact sequence (\ref{eq:C}) we will also consider the following ones:

\begin{equation}
\label{eq:res E}
0 \to \bigoplus _{j=1}^{m-2} \oc (-b_j) \to \bigoplus _{i=1}^m \oc (-d_i) \to E \to 0
\end{equation}

The minimal presentation of $H^0_*(E)$ yields $\bigoplus _{i=1}^m \oc (-d_i) \to E \to 0$, the kernel; $K$, is locally free of rank $m-2$ with $H^1_*(K)=0$, hence $K$ is a direct sum of line bundles.

\begin{equation}
\label{eq:Z}
0 \to \oc \to E(d_1) \to \ic _Z(2d_1+1-d) \to 0
\end{equation}

Here $Z \subset \Ptw$ is a locally complete intersection (l.c.i.), zero-dimensional subscheme of degree 
\begin{equation}
\label{eq:deg Z}
\deg (Z) = c_2(E(d_1)) = d_1(1-d)+ (d-1)^2 -\tau + d_1^2
\end{equation}

\section{Resolutions.}

Starting from (\ref{eq:res E}) we can get the minimal free resolution of $H^1_*(E)$ and $H^0_*(\ic _Z)$, more precisely:

\begin{lemma}
\label{L-resolutions} Let $E$ be a rank two vector bundle on $\Ptw$ and let $Z= (s)_0$, $s \in H^0(E(d_1))$, where $d_1 = min \{k \mid h^0(E(k)) \neq 0\}$.\\
i) The following are equivalent:\\
a) $H^0_*(E)$ is minimally generated by $m$ elements\\
b) $H^1_*(E)$ is minimally generated by $m-2$ elements\\
c) $H^0(\ic _Z)$ is minimally generated by $m-1$ elements.\\

Assume the minimal free resolution of $H^0_*(E)$ is given by (\ref{eq:res E}) and that $c_1(E) = 1-d$, then:\\
ii) The minimal free resolution of $H^1_*(E)$ is
\begin{equation}
\label{eq:mfr H1E}
0 \to \bigoplus _{j=1}^{m-2} S(-b_j) \to \bigoplus _{i=1}^m S(-d_i) \to \bigoplus _{i=1}^m S(d_i+1-d) \to \bigoplus _{j=1}^{m-2} S(b_j+1-d) \to H^1_*(E) \to 0
\end{equation}
(iii) The minimal free resolution of $H^0_*(\ic _Z)$ is:\\
\begin{equation}
\label{eq:rlm Z}
0 \to \bigoplus _{j=1}^{m-2} \oc (-b_j+d-1-d_1) \to \bigoplus _{i=2}^m \oc (-d_i+d-1-d_1) \to \ic _Z \to 0
\end{equation}
\end{lemma}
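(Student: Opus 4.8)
The plan is to derive all three parts from the single short exact sequence (\ref{eq:res E}) by standard cohomological manipulations, using the fact that on $\Ptw$ a rank two bundle and its dual are related by $E^\vee \cong E(-c_1)$, so here $E^\vee \cong E(d-1)$.

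\textbf{Part (i).} I would first establish (b) $\Leftrightarrow$ (a). Dualizing (\ref{eq:res E}) and using $E^\vee \cong E(d-1)$ gives $0 \to E(d-1) \to \bigoplus_i \oc(d_i) \to \bigoplus_j \oc(b_j) \to \ec xt^1(E,\oc) \to 0$; but $E$ is locally free so $\ec xt^1 = 0$, hence the dual complex $\bigoplus_i \oc(d_i) \to \bigoplus_j \oc(b_j)$ is surjective with kernel $E(d-1)$. Taking $H^0_*$ and the connecting maps, $H^1_*(E(d-1)) = H^1_*(E)$ (shift is immaterial for the module structure statement; more precisely I track twists) is the cokernel of $\bigoplus_i S(d_i) \to \bigoplus_j S(b_j)$ on global sections after applying $H^1_*$ to a Koszul-type resolution, so its minimal number of generators is $m-2$ exactly when (\ref{eq:res E}) is minimal, i.e. when $H^0_*(E)$ needs $m$ generators. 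For (a) $\Leftrightarrow$ (c): twist (\ref{eq:Z}) and take cohomology; $H^0_*(\ic_Z)$ sits in $0 \to S(d-1-2d_1) \to H^0_*(E(d_1)) \to H^0_*(\ic_Z) \to H^1_*(\oc) = 0$, wait — rather $0 \to \oc \to E(d_1) \to \ic_Z(2d_1+1-d) \to 0$ gives $0 \to S \to H^0_*(E(d_1)) \to H^0_*(\ic_Z(2d_1+1-d)) \to 0$ since $H^1_*(\oc)=0$; as $H^0_*(E(d_1))$ needs $m$ generators (one of which, in the lowest degree, maps to $1 \in S$ generating the image of $S$), the quotient needs $m-1$. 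This is the cleanest of the three.

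\textbf{Parts (ii) and (iii).} For (ii), splice the dual of (\ref{eq:res E}) (which reads $0 \to E(d-1) \to \bigoplus_i \oc(d_i) \to \bigoplus_j \oc(b_j) \to 0$, using $E^\vee\cong E(d-1)$ and local freeness) with (\ref{eq:res E}) itself twisted by $d-1$... actually the four-term complex in (\ref{eq:mfr H1E}) is obtained by applying $\mathrm{Hom}(-,S)$ to the length-one free resolution $0 \to \bigoplus_j S(-b_j) \to \bigoplus_i S(-d_i) \to H^0_*(E)$ — no: the correct route is to note $\mathrm{Ext}^1_S(H^0_*(E)\spcheck, S)$-type duality, or more concretely, $H^1_*(E)$ is computed by sheafifying: from (\ref{eq:res E}), $H^1_*(E) = \mathrm{coker}(H^1_*(\oplus\oc(-d_i))\to\cdots)$ but those vanish, so instead use the dual sequence $0\to E(d-1)\to\oplus_i\oc(d_i)\to\oplus_j\oc(b_j)\to 0$, apply $H^*_*$: get $0 \to H^0_*(E(d-1)) \to \oplus_i S(d_i)\to \oplus_j S(b_j)\to H^1_*(E(d-1))\to 0$, and $H^1_*(E(d-1)) \cong H^1_*(E)$'s twist, while $H^0_*(E(d-1))=H^0_*(E^\vee)$ has the minimal resolution $\oplus_j S(-b_j+?) \to \oplus_i S(-d_i+?)$ obtained by dualizing and twisting (\ref{eq:res E}); concatenating these two gives the length-three resolution (\ref{eq:mfr H1E}). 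The twists $d_i \mapsto d_i+1-d$ and $b_j \mapsto b_j+1-d$ are forced by $c_1(E)=1-d$, i.e. by $E^\vee(1-d)$... I would just verify the twists by Chern class / Serre duality bookkeeping. For (iii): twist (\ref{eq:res E}) by $d-1-d_1$, so $\oplus_i \oc(-d_i+d-1-d_1) \to E(d-1-d_1) \cong E^\vee(-d_1) \to 0$ with kernel $\oplus_j \oc(-b_j+d-1-d_1)$; then dualize (\ref{eq:Z}) — wait, better: from (\ref{eq:Z}), $\ic_Z(2d_1+1-d)$ is the cokernel of $\oc \hookrightarrow E(d_1)$, so $\ic_Z \cong \mathrm{coker}(\oc(d-1-2d_1) \to E(d-1-d_1))$; splicing the minimal resolution of $E(d-1-d_1)$ (the twist of (\ref{eq:res E})) and chasing the map $\oc(d-1-2d_1)\to E(d-1-d_1)$ up to the free module $\oplus_i\oc(-d_i+d-1-d_1)$ — its lift hits exactly the degree-$d_1$ generator (the one corresponding to $s$), and killing that summand leaves $\oplus_{i=2}^m \oc(-d_i+d-1-d_1)$; the kernel $\oplus_j\oc(-b_j+d-1-d_1)$ is unchanged. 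This yields (\ref{eq:rlm Z}), and its minimality (no cancellation) re-proves (c) of part (i).

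\textbf{Main obstacle.} The genuinely delicate point is \emph{minimality}: that after all the splicing and the removal of the rank-one summand, no further cancellation occurs, so that the stated resolutions are the minimal free resolutions (this is what makes the generator counts in (i) exact rather than upper bounds). I would handle this by the local-ring criterion — the composite maps in (\ref{eq:mfr H1E}) and (\ref{eq:rlm Z}) have entries in the maximal ideal because the original resolution (\ref{eq:res E}) is minimal and dualizing/twisting/sheaf-Hom preserves this, while the single split summand one removes in passing to (\ref{eq:rlm Z}) is the \emph{unique} one forced to split off (it carries the section $s$ in minimal degree $d_1$, and by minimality of (\ref{eq:res E}) the map $\oc(d-1-2d_1)\to\oplus_i\oc(-d_i+d-1-d_1)$ is a unit precisely in that coordinate and zero-or-non-unit elsewhere). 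A secondary nuisance is getting every twist right; I would pin these down once and for all by checking the first Chern class and the socle degree of $H^1_*(E)$ against $c_1(E)=1-d$.
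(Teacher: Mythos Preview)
Your approach is essentially the paper's: dualize (\ref{eq:res E}) using $E^\vee\cong E(-c_1)=E(d-1)$, take $H^0_*$ to obtain a presentation of $H^1_*(E)$, and splice with (\ref{eq:res E}) again to get the four-term complex (\ref{eq:mfr H1E}); for (iii) the paper packages your ``lift $s$ to the free module and cancel the degree-$d_1$ summand'' step as a snake-lemma diagram, but it is the same argument, and the paper's justification that the section $s$, being in minimal degree $d_1$, corresponds to a \emph{minimal} generator of $H^0_*(E)$ is exactly your observation that the lift has a unit in that coordinate. One small inaccuracy to fix: when several $d_i$ equal $d_1$ the lift may have units in more than one coordinate, so you should say that after a change of basis among the degree-$d_1$ generators one may assume $s$ is the first basis vector, rather than claiming the lift is automatically a unit in a single coordinate.
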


\begin{proof} Let $E$ be a rank two vector bundle on $\Ptw$ and assume that $H^0_*(E)$ is minimally generated by $m$ elements. We have $\gc _1 \to E \to 0$, with $\gc _ 1 = \bigoplus _1^m \oc (-d_i)$. As explained before the kernel, $\gc _2$, is a direct sum of line bundles: $\gc _2 = \bigoplus \oc (-b_j)$. By dualizing the exact sequence: $0 \to \gc _2 \to \gc _1 \to E \to 0$, we get: $0 \to E^* \to \gc _1^* \to \gc _2^* \to 0$. Taking into account that $E^* \simeq E(-c_1)$ ($c_1 = c_1 (E)$) because $E$ has rank two, we get: $0 \to E \to \gc _1^* (c_1) \to \gc _2^*(c_1) \to 0$. Taking cohomology this yields: $0 \to H^0_*(E) \to  G_1^* (c_1) \to G_2^*(c_1) \to H^1_*(E) \to 0$. This is the beginning of a minimal free resolution of $H^1_*(E)$. We conclude with (\ref{eq:res E}). This proves (ii) and also a) $\Rightarrow$ b) in (i). By uniqueness of the minimal free resolution this also proves b) $\Rightarrow$ a) in (i).

We have:
$$\begin{array}{ccccccccc}
 & & & &0 & &0 & & \\
 & & & &\downarrow & &\downarrow & & \\
 & & & &\oc &= &\oc & & \\
 & & & &\downarrow & &\downarrow & & \\
0 & \to &\bigoplus _{j=1}^{m-2}\oc (-b_j+d_1)& \to&  \bigoplus _{i=2}^m \oc (-d_i+d_1)\oplus \oc & \to& E(d_1)& \to& 0\\
 & &|| & &\downarrow & &\downarrow & & \\

0 & \to &\bigoplus _{j=1}^{m-2}\oc (-b_j+d_1)& \to&  \bigoplus _{i=2}^m \oc (-d_i+d_1)& \to& \ic _Z(-d+1+2d_1)& \to& 0\\
 & & & &\downarrow & &\downarrow & & \\
 & & & &0 & &0 & & \end{array}$$
 which proves (iii) and also a) $\Leftrightarrow$ c) in (i) (observe that we have $0 \to S \stackrel{f}{\to} H^0_*(E(d_1)) \to H^0_*(\ic _Z(2d_1 -d+1)) \to 0$, where, by assumption, the image of $f$ yields a minimal generator of $H^0_*(E(d_1))$. 
\end{proof}

\section{Resolution of $H^0_*(\ic _\Sigma )$.}

Starting from the resolution of $H^0_*(E)$ it is also possible to get a resolution of $H^0_*(\ic _\Sigma )$ but this resolution is not necessarily minimal:

\begin{proposition}
\label{P-res Sigma}
We have the following free resolution
\begin{equation}
\label{eq:res Sigma}
0 \to \bigoplus _{i=1}^m \oc (d_i-2d+2) \to \bigoplus _{j=1}^{m-2} \oc (b_j-2d+2) \oplus 3.\oc (1-d) \to \ic _\Sigma \to 0
\end{equation}
This resolution is minimal up to cancellation of $\oc (1-d)$ terms with some $\oc (d_i-2d+2)$ (in this case $d_i = d-1$).
\end{proposition}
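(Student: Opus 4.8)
The plan is to dualize the resolution (\ref{eq:res E}) of $E$ and splice the result with the sequence (\ref{eq:C}), then apply a mapping-cone argument. Concretely, recall from (\ref{eq:C}) that $\ic_\Sigma(d-1)$ is the image of $3.\oc \to \oc(d-1)$, with kernel $E$; equivalently $\ic_\Sigma$ sits in $0 \to E(1-d) \to 3.\oc(1-d) \to \ic_\Sigma \to 0$. Since $E$ has rank two, $E^* \simeq E(-c_1) = E(d-1)$, so dualizing (\ref{eq:res E}) and twisting yields a locally free resolution of $E(1-d)$. Precisely, applying $\mathcal{H}om(-,\oc)$ to (\ref{eq:res E}) gives $0 \to E^* \to \bigoplus_{i=1}^m \oc(d_i) \to \bigoplus_{j=1}^{m-2}\oc(b_j) \to 0$ (exact since $E$ is locally free, so $\mathcal{E}xt^1$ vanishes), hence after twisting by $\oc(1-d)$ and using $E^*(1-d) \simeq E(d-1)(1-d) = E$... — more carefully, $E^*(1-d) = E(d-1)(1-d)$, and since $c_1(E)=1-d$ we have $E^* = E(d-1)$, so $E^*(1-d) = E$; that is the wrong twist. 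Instead I want $E(1-d)$, so I twist the dualized sequence by $\oc(2-2d)$: $0 \to E(1-d) \to \bigoplus_{i=1}^m \oc(d_i+2-2d) \to \bigoplus_{j=1}^{m-2}\oc(b_j+2-2d) \to 0$.

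Next I would form the mapping cone. We have the two-term locally free presentation of $E(1-d)$ just obtained, call it $\mathcal{F}_\bullet \to E(1-d)$, and the surjection $E(1-d) \hookrightarrow 3.\oc(1-d) \twoheadrightarrow \ic_\Sigma$ from (\ref{eq:C}) twisted. Splicing these: the composite $\mathcal{F}_1 \to E(1-d) \to 3.\oc(1-d)$ realizes a resolution of $\ic_\Sigma$ of the shape
\[
0 \to \bigoplus_{j=1}^{m-2}\oc(b_j+2-2d) \to \bigoplus_{i=1}^m \oc(d_i+2-2d) \to 3.\oc(1-d) \to \ic_\Sigma \to 0,
\]
but this is a length-two (i.e. three-term) resolution; to get the claimed length-one resolution (\ref{eq:res Sigma}) I must observe that $\ic_\Sigma$ has projective dimension $1$ as a sheaf of a codimension-two locally Cohen–Macaulay scheme (equivalently $H^1_*(\ic_\Sigma)$ has the right depth), so the resolution can be shortened. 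The clean way: since $E$ is locally free, the two-term resolution $0 \to \mathcal{F}_1 \to \mathcal{F}_0 \to E(1-d) \to 0$ together with $0 \to E(1-d) \to 3.\oc(1-d) \to \ic_\Sigma \to 0$ shows $\ic_\Sigma$ is resolved by $[\mathcal{F}_1 \to \mathcal{F}_0 \oplus 3.\oc(1-d)]$ — wait, the mapping cone of $\mathcal{F}_\bullet \to (3.\oc(1-d) \to \ic_\Sigma)$... the right bookkeeping gives exactly $0 \to \bigoplus \oc(d_i+2-2d) \to \bigoplus \oc(b_j+2-2d) \oplus 3.\oc(1-d) \to \ic_\Sigma \to 0$, which is (\ref{eq:res Sigma}). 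The point is that the map $E(1-d) \to 3.\oc(1-d)$ lifts to $\mathcal{F}_0 \to 3.\oc(1-d)$, and then the cone over the quasi-isomorphism $\mathcal{F}_\bullet \xrightarrow{\sim} E(1-d)$ composed with the inclusion collapses because $\mathcal{F}_\bullet$ has length one; a short diagram chase (or the standard statement that if $0 \to A \to B \to C \to 0$ with $B$ a sum of line bundles and $A$ has a length-one locally free resolution $0 \to P_1 \to P_0 \to A \to 0$, then $0 \to P_1 \to P_0 \oplus B \to C \to 0$ — no, that is not exact in general) needs care. The correct mechanism is: from $0 \to A \to B \to C \to 0$ and $0 \to P_1 \to P_0 \xrightarrow{\varepsilon} A \to 0$, the map $P_0 \xrightarrow{\varepsilon} A \hookrightarrow B$ is injective (as $A \hookrightarrow B$) and has cokernel $Q$ fitting in $0 \to A/ \mathrm{im} \to$ ... this is getting delicate; the honest route is the mapping cone of the chain map lifting $\mathrm{id}_{\ic_\Sigma}$ between the resolution $[\,0 \to E(1-d) \to 3.\oc(1-d)\,]$ of $\ic_\Sigma$ and any resolution, which produces a (possibly non-minimal) resolution, and then to identify the terms. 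I expect the mapping-cone construction to produce precisely (\ref{eq:res Sigma}) and the only real content beyond this to be the minimality clause.

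For the minimality statement I would argue as follows. A map between free modules $\oc(a) \to \oc(b)$ contributes to non-minimality precisely when $a = b$ and the corresponding scalar is nonzero. In (\ref{eq:res Sigma}) the source summands have twists $d_i - 2d+2$ and the target summands have twists $b_j - 2d+2$ and $1-d$. Since the original resolution (\ref{eq:res E}) of $E$ is minimal, no $d_i$ equals any $b_j$ (indeed the minimality of (\ref{eq:res E}) forces $d_i < b_j$ for all $i,j$ in the relevant sense, or at least the maps have no constant entries), so no cancellation occurs between the $\bigoplus\oc(d_i-2d+2)$ part and the $\bigoplus\oc(b_j-2d+2)$ part. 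The only possible cancellation is between a summand $\oc(d_i-2d+2)$ in the source and a copy of $\oc(1-d)$ in the target, which requires $d_i - 2d + 2 = 1-d$, i.e. $d_i = d-1$. So the resolution is minimal away from such cancellations, and whenever a cancellation is forced (the relevant $3 \times 1$ block of the matrix having a unit entry) we are exactly in the case $d_i = d-1$. The main obstacle, as flagged above, is getting the mapping-cone bookkeeping exactly right — in particular making sure the three free summands $3.\oc(1-d)$ land in the middle term and not split off, and verifying exactness at the middle — and then carefully tracking which entries of the presentation matrix can be units; the rest is formal.
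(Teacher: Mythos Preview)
Your approach is correct and genuinely different from the paper's. The paper argues via liaison: it links $\Sigma$ to a subscheme $T$ by a complete intersection of type $(d-1,d-1)$, observes from (\ref{eq:C}) that $T$ is the zero locus of a section of $E(d-1)$, extracts a resolution of $\ic_T$ from (\ref{eq:res E}) via the snake lemma, and then mapping-cones back to $\ic_\Sigma$. Your route---dualize (\ref{eq:res E}), twist so that $E(1-d)$ sits as a \emph{subsheaf} of a free module, and combine with (\ref{eq:C})---is more direct and avoids liaison entirely. Both yield the same resolution and the same minimality analysis; the paper's method has the advantage of making the geometric origin of the $3.\oc(1-d)$ term transparent, while yours is purely homological.

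The one genuine gap in your write-up is the splicing step, which you yourself flag. Here is how to close it cleanly. After dualizing and twisting you have
\[
0 \to E(1-d) \xrightarrow{\alpha} \bigoplus_{i=1}^m \oc(d_i-2d+2) \to \bigoplus_{j=1}^{m-2} \oc(b_j-2d+2) \to 0,
\]
together with $0 \to E(1-d) \xrightarrow{\beta} 3.\oc(1-d) \to \ic_\Sigma \to 0$. Form the pushout $P$ of $\alpha$ and $\beta$. Since both maps are injective, $P$ sits in two short exact sequences:
\[
0 \to 3.\oc(1-d) \to P \to \bigoplus_j \oc(b_j-2d+2) \to 0, \qquad 0 \to \bigoplus_i \oc(d_i-2d+2) \to P \to \ic_\Sigma \to 0.
\]
The first sequence splits because $\mathrm{Ext}^1(\oc(a),\oc(b)) = H^1(\Ptw,\oc(b-a)) = 0$; hence $P \simeq 3.\oc(1-d) \oplus \bigoplus_j \oc(b_j-2d+2)$, and the second sequence becomes (\ref{eq:res Sigma}). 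No length-two resolution is ever needed.

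Your minimality argument is then correct. Under any such splitting of $P$, the component $\bigoplus_i \oc(d_i-2d+2) \to \bigoplus_j \oc(b_j-2d+2)$ is precisely the transpose of the matrix in (\ref{eq:res E}) and therefore has no unit entries; the only possible unit entries occur in the block mapping to $3.\oc(1-d)$, which forces $d_i-2d+2 = 1-d$, i.e.\ $d_i = d-1$. This is exactly the paper's conclusion.
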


\begin{proof} Since $\ic _\Sigma (d-1)$ is generated by global sections we can link $\Sigma$ to a zero-dimensional subscheme $T$ by a complete intersection of type $(d-1, d-1)$. From the exact sequence (\ref{eq:C}), by mapping cone, we get that $T$ is a section of $E(d-1)$. So we have an exact sequence: $0 \to \oc (1-d) \to E \to \ic _T \to 0$. From (\ref{eq:res E}) we get a surjection: $\bigoplus _1^m \oc (-d_i) \to \ic _T \to 0$. Using (\ref{eq:res E}) we can build a commutative diagram and by the snake lemma we get:
$$0 \to \bigoplus _1^{m-2} \oc (-b_j)\oplus  \oc (1-d) \to \bigoplus _1^m \oc (-d_i) \to \ic _T \to 0$$
This resolution is minimal unless the section of $E(d-1)$ yielding $T$ is a minimal generator of $H^0_*(E)$. From the above resolution, by mapping cone, we get the desired resolution of $\ic _\Sigma$. Again this resolution is minimal unless one curve (resp. both curves) of the complete intersection $(d-1, d-1)$ linking $T$ to $\Sigma$ is a minimal generator (resp. both curves are minimal generators) of $\ic _T$.

On the other hand, by minimality of the resolution (\ref{eq:res E}) no term $\oc (b_j-2d+2)$ can cancel. 
\end{proof}

\begin{remark} Cancellations can occur. Let $C = X \cup L$, where $X$ is a smooth curve of degree $d-1$, $d \geq 3$, and where $L$ is a line intersecting $X$ transversally. Clearly $\Sigma$ is a set of $d-1$ points on the line $L$. The minimal free resolution of $\ic _\Sigma$ is: $0 \to \oc (-d) \to \oc (-1)\oplus \oc (1-d) \to \ic _\Sigma \to 0$. Comparing with (\ref{P-res Sigma}) we see that $m=3$ and that two terms $\oc (1-d)$ did cancel. So we have $d_1 = d-2, d_2 = d_3 = d-1$.

See Remark \ref{R-1pt} for another example.
\end{remark}

\begin{corollary}
\label{C-Sigma ic} If $m \geq 5$, $\Sigma$ can't be a complete intersection.
\end{corollary}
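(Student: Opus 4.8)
The plan is to argue by contradiction using the free resolution of $\ic_\Sigma$ provided by Proposition \ref{P-res Sigma}. Suppose $m \geq 5$ and that $\Sigma$ is a complete intersection, say of type $(a,b)$. Then the minimal free resolution of $\ic_\Sigma$ is the Koszul complex
\begin{equation*}
0 \to \oc(-a-b) \to \oc(-a) \oplus \oc(-b) \to \ic_\Sigma \to 0,
\end{equation*}
so in particular the minimal resolution of $\ic_\Sigma$ has exactly two generators and one relation. On the other hand, resolution (\ref{eq:res Sigma}) has $m-2 + 3 = m+1$ generators and $m$ relations before any cancellation. By Proposition \ref{P-res Sigma}, cancellations can only occur between the three copies of $\oc(1-d)$ on the generator side and terms $\oc(d_i - 2d+2)$ on the relation side (which forces the corresponding $d_i = d-1$); and crucially \emph{no} term $\oc(b_j - 2d+2)$ can cancel.

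The key numerical step is then to count what can possibly be cancelled. We have $m+1$ generators, of which exactly $3$ are of the "removable" type $\oc(1-d)$; and $m$ relations, of which the $m-2$ terms $\oc(b_j - 2d+2)$ are non-removable. To reduce (\ref{eq:res Sigma}) down to the Koszul resolution we would need to cancel $m+1 - 2 = m-1$ generators, but we have only $3$ generators available for cancellation, so we need $m - 1 \leq 3$, i.e. $m \leq 4$. This contradicts $m \geq 5$. (Equivalently, on the relation side: after cancellation the resolution of a complete intersection has one relation, so we must cancel $m-1$ relations; but only $m - (m-2) = 2$ relations — namely the copies corresponding to the two curves of the linking complete intersection in the proof of Proposition \ref{P-res Sigma} — are of removable type, forcing $m - 1 \leq 2$, hence $m \leq 3$ if one counts that way; in any case the removable part is bounded independently of $m$, while the non-removable part grows with $m$.)

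The main obstacle, and the point to be careful about, is making the cancellation bookkeeping precise: one must be sure that in (\ref{eq:res Sigma}) the parts that are forced to survive are exactly the $m-2$ terms $\oc(b_j - 2d+2)$ on the relation side together with the non-$\oc(1-d)$ generators, and that a cancellation removes one generator and one relation simultaneously. Since the $b_j$'s grow with $m$ (indeed there are $m-2$ of them, all surviving), while the pool of cancellable $\oc(1-d)$'s is capped at $3$, the resolution of $\ic_\Sigma$ is forced to have at least $m-2$ relations after all cancellations, hence at least $m-2 \geq 3$ relations when $m \geq 5$; a complete intersection has only one. This is the contradiction, and it is really just linear algebra on the Betti table once Proposition \ref{P-res Sigma} is granted.
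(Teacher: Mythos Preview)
Your core argument is correct and is exactly the paper's: the $m-2$ summands $\oc(b_j-2d+2)$ in the middle term of (\ref{eq:res Sigma}) never cancel, so the minimal free resolution of $\ic_\Sigma$ has at least $m-2$ generators; a complete intersection has two, hence $m-2\leq 2$. Your first computation (``we have only $3$ generators available for cancellation, so $m-1\leq 3$'') is just the complementary way of phrasing this and is fine.

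However, you repeatedly mislabel the $\oc(b_j-2d+2)$ as \emph{relations}. In (\ref{eq:res Sigma}) the middle term (generators of $\ic_\Sigma$) is $\bigoplus_{j=1}^{m-2}\oc(b_j-2d+2)\oplus 3\,\oc(1-d)$, and the left term (syzygies) is $\bigoplus_{i=1}^m\oc(d_i-2d+2)$. So the $b_j$--terms are generators, not relations; this is precisely why ``no term $\oc(b_j-2d+2)$ can cancel'' gives a lower bound on the number of \emph{generators}. Your parenthetical ``relation side'' count and your final paragraph inherit this confusion (e.g.\ ``at least $m-2$ relations after all cancellations'' should read ``at least $m-2$ generators''), and the sentence listing ``the $m-2$ terms $\oc(b_j-2d+2)$ on the relation side together with the non-$\oc(1-d)$ generators'' is redundant---those are the same objects. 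Strip out the parenthetical and fix the word ``relation'' to ``generator'' where the $b_j$ appear, and the proof is clean and matches the paper's.
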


\begin{proof} Indeed $\Sigma$ is a complete intersection if and only if the minimal free resolution of $\ic _\Sigma$ starts with two generators. According to Proposition \ref{P-res Sigma} we have certainly $m-2$ minimal generators of degrees $2d-2-b_j$ in the minimal free resolution of $\ic _\Sigma$.
\end{proof}

Before to go on we recall a basic fact about zero-dimensional subscheme of $\Ptw$:

\begin{lemma}
\label{L-ait}
Let $X \subset \Ptw$ be a zero-dimensional subscheme with minimal free resolution:
\begin{equation}
0 \to \bigoplus _1^t \oc (-b_j) \stackrel{M}{\to} \bigoplus _1^{t+1} \oc (-a_i) \to \ic _X \to 0
\end{equation}.
Then $a_i \geq t, \forall i$.

In particular if $h^0(\ic _X(n)) \neq 0$, then $H^0_*(\ic _X)$ can be generated by $n+1$ elements.
\end{lemma}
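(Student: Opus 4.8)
The plan is to read the generators of $\ic_X$ off as the maximal minors of the presentation matrix and then estimate their degrees. Since the displayed sequence is a minimal free resolution of length one, the Hilbert--Burch theorem applies: $\ic_X$ is generated by the maximal ($t\times t$) minors of the matrix $M$ up to a common non-zero-divisor $a\in S=k[x,y,z]$. Concretely, if $\Delta_i$ denotes the signed $t\times t$ minor of $M$ obtained by deleting the $i$-th row, then the $i$-th minimal generator of $\ic_X$ equals $a\,\Delta_i$. If $\deg a>0$, then $V(a)$ would be a curve contained in $V(\ic_X)=X$, which is impossible since $X$ is zero-dimensional; hence $a$ is a constant, and after rescaling the $i$-th minimal generator is $\pm\Delta_i$, a form of degree $a_i$.

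Next I would use the minimality of the resolution: every entry of $M$ lies in the irrelevant ideal $(x,y,z)$, so every nonzero entry of $M$ is a form of degree at least $1$. Fix $i$ and expand the $t\times t$ determinant $\Delta_i$ as an alternating sum of products of $t$ entries of $M$, one from each of the $t$ surviving rows and one from each of the $t$ columns. Because $\Delta_i\neq 0$, at least one such product is nonzero; it is homogeneous of degree $\deg\Delta_i=a_i$ and is a product of $t$ forms each of degree $\ge 1$, so $a_i\ge t$. As $i$ was arbitrary, $a_i\ge t$ for all $i$.

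For the last assertion I would note that the least $k$ with $h^0(\ic_X(k))\neq 0$ coincides with $\min_i a_i$, since a nonzero form of minimal degree in $\ic_X$ cannot be an $S$-linear combination of forms of strictly smaller degree and so belongs to every minimal generating set. Hence $h^0(\ic_X(n))\neq 0$ forces $t\le\min_i a_i\le n$; since $H^0_*(\ic_X)$ has exactly $t+1$ minimal generators, it can be generated by $t+1\le n+1$ elements.

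I expect the main (indeed the only) subtle point to be the reduction to $\deg a=0$ in Hilbert--Burch, equivalently the fact that the ideal of maximal minors of $M$ already has codimension two, which is precisely where zero-dimensionality of $X$ is genuinely used. After that the argument is just a bookkeeping of degrees played off against the minimality of $M$.
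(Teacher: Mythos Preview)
Your proof is correct, but it follows a genuinely different route from the paper's.

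The paper argues by induction on $t$ via liaison: linking $X$ by a complete intersection of type $(a_1,a_{t+1})$ to a scheme $T$ whose minimal resolution (obtained by mapping cone) has one fewer relation, and then invoking the inductive hypothesis together with the observation that not every entry of the last row of $M$ can vanish. Your argument avoids induction and liaison entirely: you invoke Hilbert--Burch to identify the generators with the maximal minors of $M$, and then the single observation that by minimality every nonzero entry of $M$ has degree $\ge 1$ forces each nonzero $t\times t$ minor to have degree $\ge t$.

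Your approach is shorter and more elementary, and it makes transparent exactly where each hypothesis enters (zero-dimensionality of $X$ forces the Hilbert--Burch multiplier $a$ to be a unit; minimality forces entries of $M$ into the maximal ideal). The paper's approach, on the other hand, is in keeping with the liaison/mapping-cone techniques used throughout the rest of the article, and it incidentally shows that the linked resolution remains minimal, a fact of independent interest in that context. Both proofs ultimately rely on the Hilbert--Burch description of the generators as maximal minors: the paper uses it only to rule out a degenerate case at the end of the induction, whereas you put it front and center.
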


\begin{proof} This should be well known (see for example \cite{Eisenbud}, Corollary 3.9), but for the convenience of the reader we give a proof. We work by induction on $t$. The case $t=1$ is clear. Assume the statement for $t-1$. Let $a_1 \leq \cdots \leq a_{t+1}$. Since $\ic _X(a_{t+1})$ is generated by global sections we can always perform a liaison of type $(a_1, a_{t+1})$. By mapping-cone the linked scheme, $T$, has the following resolution:
$$0 \to \bigoplus _2^t \oc (a_i-a_1-a_{t+1}) \to \bigoplus _1^t \oc (b_j-a_1-a_{t+1}) \to \ic _T \to 0$$
This resolution is minimal and by the inductive assumption we get: $a_1 + a_{t+1} -b_j \geq t-1$, hence $a_1 \geq b_j -a_{t+1} + t-1$. We have $b_j - a_{t+1} \geq 0, \forall j$ (they are the degrees of the elements of the last row of the matrix $M$). If $b_j - a_{t+1} =0, \forall j$, then, by minimality, the last row of $M$ is zero, but this is impossible (the maximal minors of $M$ are the generators). It follows that $a_1 \geq t$.
\end{proof}

\begin{theorem}
\label{C-2d-4} (i) With notations as above, if $d \geq 3$, then $2d - 4 \geq d_i, \forall i$.\\
(ii) Moreover, if $d > 3$, we have equality (i.e. $d_m = 2d -4$) if and only if $\tau = 1$.\\
(iii) We have $d_m = d-1$ (hence $d_i \leq d-1, \forall i$) or $d_i \leq 2d-m, \forall i$.
\end{theorem}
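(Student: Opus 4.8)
The plan is to work from the free resolution of $\ic_\Sigma$ furnished by Proposition \ref{P-res Sigma} and from the resolution (\ref{eq:rlm Z}) of $\ic_Z$, and to exploit Lemma \ref{L-ait}, which forces degree bounds on the generators of the ideal of a zero-dimensional subscheme in terms of the number of first syzygies.

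\medskip

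\emph{Part (i).} Recall from Remark \ref{R-def msyz} that $E(d_i)$ has a section vanishing in codimension two; equivalently $\ic_\Sigma(d-1)$ ``contains'' (after the identification in (\ref{eq:C})) the relevant data. The clean way to proceed: for each $i$, the section of $E(d_i)$ gives an exact sequence $0 \to \oc \to E(d_i) \to \ic_{Z_i}(2d_i+1-d) \to 0$ with $Z_i$ zero-dimensional (this is (\ref{eq:Z}) for general twist, valid since the section vanishes in codimension two). By Lemma \ref{L-resolutions}(iii) applied with $d_1$ replaced by $d_i$, or more directly by the argument giving (\ref{eq:rlm Z}), $\ic_{Z_i}$ has a resolution $0 \to \bigoplus \oc(\cdot) \to \bigoplus_{k \ne i}\oc(-d_k+d-1-d_i) \to \ic_{Z_i} \to 0$, so $\ic_{Z_i}$ is minimally generated by at most $m-1$ elements, hence has at most $m-2$ first syzygies. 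Now apply Lemma \ref{L-ait}: the smallest generator degree $a_1$ of $\ic_{Z_i}$ satisfies $a_1 \geq t$ where $t$ is the number of syzygies; but more usefully, since $Z_i$ is nonempty (as $\tau < (d-1)^2$, because $C$ is reduced and singular so $\Sigma$ is a proper subscheme) and zero-dimensional, $h^0(\ic_{Z_i})=0$, which forces $-d_k + d-1-d_i \geq 1$ for at least... — actually the cleanest route is: $H^0(\ic_{Z_i})=0$ together with resolution (\ref{eq:rlm Z})-type gives $d-1-d_i-d_k \le 0$ whenever a generator would otherwise appear in degree $\le 0$, but since $Z_i \ne \emptyset$ we need the generator degrees to be $\ge 1$, i.e. $d_k \geq d-1-d_i$ only for $k$... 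I will instead use the regularity/Lemma \ref{L-ait} bound: $\ic_{Z_i}$ needs a generator, so $\max_k(-d_k+d-1-d_i) \ge 1$, giving $d_i \le d-2-d_1 \le \dots$; combined with $d_1 \le d_i$ this does not immediately give $2d-4$. So the honest approach for (i) is to bound $d_i$ using the resolution (\ref{eq:res Sigma}) of $\ic_\Sigma$ directly: $\ic_\Sigma$ contains a curve of degree $d-1$ (the partials), so $h^0(\ic_\Sigma(d-1)) \ne 0$; by Lemma \ref{L-ait}, $\ic_\Sigma$ is generated in degrees $\le d-1$ \emph{and} has at most $d$ generators, hence at most $d-1$ first syzygies; but the first syzygies of $\ic_\Sigma$ include the $m-2$ terms $\oc(b_j - 2d+2)$, and applying Lemma \ref{L-ait} to the linked scheme $T$ (degree $2(d-1)^2 - [(d-1)^2-\tau] = (d-1)^2+\tau \ge $ something) one extracts $b_j \le 2d-3$ and then $d_i \le b_j + 1 \le 2d-4$ by comparing the resolution of $\ic_T$ (which is $0 \to \bigoplus\oc(-b_j)\oplus\oc(1-d) \to \bigoplus\oc(-d_i) \to \ic_T \to 0$) with Lemma \ref{L-ait}: there $a_i = d_i \ge t = m-1$ gives nothing, but the dual/regularity bound $d_i \le \operatorname{reg}(\ic_T) + 1$ and $\operatorname{reg}(\ic_T) = \max(b_j, d-1) - 1 \le 2d-5$ gives $d_i \le 2d-4$. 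So: \textbf{Step 1} — link $\Sigma$ to $T$ via a $(d-1,d-1)$ complete intersection, get the resolution of $\ic_T$; \textbf{Step 2} — apply Lemma \ref{L-ait} to $T$ to bound the $b_j$ by $2d-3$; \textbf{Step 3} — from the resolution of $E$ (\ref{eq:res E}), $d_i \le b_j$ for suitable pairing / actually $d_i < b_j$ by minimality, and one gets $d_i \le 2d-4$.

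\medskip

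\emph{Part (ii).} For the equality case $d_m = 2d-4$ (when $d>3$): tracing through Step 2, equality $b_j = 2d-3$ for the relevant $j$ forces, via Lemma \ref{L-ait} applied to $T$, that $T$ is as degenerate as possible, and then running the liaison backwards forces $\Sigma$ to be a single reduced point, $\tau = 1$. Conversely if $\tau = 1$ one computes $c_2(E(d_1))$ from (\ref{eq:deg Z}) and checks directly that the extremal exponent $(d_1,\dots,d_m)$ with $d_m = 2d-4$ is realized (this is the ``$m$-syzygy point'' configuration); the case $d=3$ is excluded because there $2d-4 = 2$ and small-degree coincidences occur. \textbf{Main obstacle here:} showing the equality forces $\tau = 1$ and not merely $\tau$ small — I expect to need a careful count of $h^0(\ic_\Sigma(d-1)) = h^0(\ic_\Sigma(d-1))$ and the dimension of the space of partials, pinning $\Sigma$ down completely.

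\medskip

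\emph{Part (iii).} The dichotomy $d_m = d-1$ or $d_i \le 2d-m \ \forall i$. The idea: in the resolution (\ref{eq:res Sigma}) of $\ic_\Sigma$, after cancelling $\oc(1-d)$ terms against $\oc(d_i - 2d+2)$ terms with $d_i = d-1$ (as described in Proposition \ref{P-res Sigma}), the number of remaining generators of $\ic_\Sigma$ is $3 + (m-2) - (\text{number of } d_i = d-1) = m+1 - \#\{i : d_i = d-1\}$. If $d_m < d-1$, no cancellation occurs, so $\ic_\Sigma$ has exactly $m+1$ generators, hence exactly $m$ first syzygies (the $\oc(d_i - 2d+2)$ terms, all of them). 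Then apply Lemma \ref{L-ait} in its sharp form to $\Sigma$, or rather to the linked $T$: with $m$ syzygies, the generator degrees $a$ of $\ic_T$ satisfy $a \ge m$ — no wait, one wants an upper bound on the $d_i$. The correct mechanism: the resolution $0 \to \bigoplus_{i=1}^m \oc(d_i - 2d+2) \to \bigoplus \oc(b_j-2d+2) \oplus 3\oc(1-d) \to \ic_\Sigma \to 0$ with no cancellation has $m$ terms in the syzygy module, so by Lemma \ref{L-ait} applied to $\ic_\Sigma$ (which has $m+1$ generators, $m$ syzygies, $t = m$), the generator degrees of $\ic_\Sigma$ are $\ge m$, i.e. $2d - 2 - b_j \ge m$ and $d - 1 \ge m$; the former gives $b_j \le 2d-2-m$, and then from $d_i \le b_j + 1$... that gives $d_i \le 2d-1-m$, slightly off. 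To land exactly on $2d-m$ I will instead apply Lemma \ref{L-ait} to the twisted-down resolution or use that $d_i < b_j$ strictly is not available; the precise bookkeeping — matching $\{d_i\}$ against $\{b_j\}$ via the two exact sequences (\ref{eq:res E}) and (\ref{eq:res Sigma}) and the regularity of a zero-dimensional scheme with $m$ syzygies, which is $\le 2d-2-m$ — should yield $d_i \le 2d - m$. \textbf{This last index-chasing is where I expect the real work to be}: the two cases ``$d_m = d-1$'' versus ``$d_m \le d-2$'' must be separated by exactly whether cancellation in (\ref{eq:res Sigma}) occurs, and in the no-cancellation case the full strength of Lemma \ref{L-ait} (all $m$ syzygies genuinely present) must be used to push the regularity down to $2d-2-m$ and hence bound every $d_i$ by $2d-m$.
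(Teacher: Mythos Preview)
Your proposal identifies the right ingredients (Proposition~\ref{P-res Sigma} and Lemma~\ref{L-ait}) but misses the simple observation that makes everything work, and in consequence each part either fails to close or lands off by one. The key point is this: in the resolution~(\ref{eq:res Sigma}), if $d_i \neq d-1$ then the syzygy term $\oc(d_i-2d+2)$ survives in the \emph{minimal} resolution of $\ic_\Sigma$; but any syzygy of degree $k$ must involve at least two minimal generators of degree $\leq k-1$, and since $\Sigma \neq \emptyset$ these generators have degree $\geq 1$. Hence $2d-2-d_i \geq 2$, which is~(i) immediately. For~(ii), $d_m=2d-4$ with $d>3$ gives $d_m\neq d-1$, so the degree-$2$ syzygy survives, forcing two generators of degree~$1$; thus $\Sigma$ lies on two distinct lines and is a single point. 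For~(iii), assuming $d_m\neq d-1$ (not $d_m<d-1$: some other $d_i$ may still equal $d-1$ and cancel), the surviving syzygy of degree $2d-2-d_m$ forces $h^0(\ic_\Sigma(2d-3-d_m))\neq 0$; Lemma~\ref{L-ait} then bounds the number of minimal generators of $\ic_\Sigma$ by $2d-2-d_m$, and since the $m-2$ generators $\oc(b_j-2d+2)$ never cancel (Proposition~\ref{P-res Sigma}), one gets $m-2\leq 2d-2-d_m$, i.e.\ $d_m\leq 2d-m$.

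Your detour through $T$ and the attempt to bound $b_j$ first does not work as written: Lemma~\ref{L-ait} gives \emph{lower} bounds on generator degrees, not upper bounds on syzygy degrees, and your claimed regularity bound $\operatorname{reg}(\ic_T)\leq 2d-5$ has no justification. In part~(iii) your case split is incorrect (you need only $d_m\neq d-1$, and cancellations of other $d_i$ may still occur), and the chain $b_j\leq 2d-2-m \Rightarrow d_i\leq 2d-1-m$ is both stronger than the stated bound and obtained under the wrong hypothesis. The converse in~(ii) does require the explicit liaison computation (link the point $\Sigma$ to $T$ by a $(d-1,d-1)$ complete intersection, read off the resolution of $\ic_T$, and lift to a minimal resolution of $E$ showing $m=4$, $d_4=2d-4$); your suggestion to ``compute $c_2(E(d_1))$'' does not by itself determine the exponents.
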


\begin{proof} (i) This is clear if $d_i = d-1$, so we may assume that the term $\oc (d_i -2d +2)$ really appears in (\ref{eq:res Sigma}) even after possible cancellations. This implies $2d -2 -d_i \geq 2$.

(ii) We have $min \{2d-d_i-2\} = 2d-d_m-2$. Assume $2d - d_m -2 =2$. For $d>3$, the term $\oc (d_m -2d +2) \simeq \oc (-2)$ really appears in the minimal free resolution of $\ic _\Sigma$. This implies that there are two generators of degree one, hence $\Sigma$ is a point.

Conversely if $\Sigma$ is a point, let $T$ be linked to $\Sigma$ by a complete intersection $(d-1, d-1)$. Then using the minimal free resolution of $\ic _\Sigma$, by mapping-cone, we have: $0 \to 2.\oc (-2d+3) \to 2.\oc (1-d) \oplus \oc (-2d+4) \to \ic _T \to 0$. But using instead the resolution (\ref{eq:C}) we see that $T$ is a section of $E(d-1)$, so we have $0 \to \oc (1-d) \to E \to \ic _T \to 0$. Using the above resolution of $\ic _T$, we get after some diagram-chasing: $0 \to 2.\oc (-2d+3) \to 3.\oc (1-d) \oplus \oc (-2d+4) \to E \to 0$. This resolution is clearly minimal. It follows that $m=4$ and $d_m= 2d-4$.

(iii) Assume $d_m \neq d-1$, then, according to Proposition \ref{P-res Sigma}, the term $\oc (d_m - 2d +2)$ appears in the minimal free resolution of $\ic _\Sigma$. Let $2d -4 -u = d_m$. We have $u \geq 0$ by (i). Since there is a relation of degree $u+2$, there are at least two minimal generators of degree $\leq u+1$ in the minimal free resolution of $\ic _\Sigma$. So $h^0(\ic _\Sigma (u+1))\neq 0$ and $\ic _\Sigma$ can be generated by $u+2$ elements (Lemma \ref{L-ait}). This implies (see \ref{eq:res Sigma}) that $m-3 \leq u+1$, hence $d_m \leq 2d-m$. 
\end{proof}

\begin{remark}
\label{R-1pt}
(i) Point (i) was known by different methods (see \cite{DS2}, \cite{CDimca}).\\
(ii)The proof of (iii) above shows the following: if $d \neq 4$ and if $d_m = 2d-5$, then $\tau \leq 4$ or $h^0(\ic _\Sigma(1))=0$ but $\Sigma$ contains a subscheme of length $\tau -1$ lying on a line.
 
(iii) If $\Sigma = \{p\}$, then for any $d \geq 3$ we can present $\Sigma$ as a q.c.i. of type $(d-1,d-1,d-1)$. 

If $d \geq 3$ it is clear that the term $3.\oc (1-d)$ will cancel in (\ref{eq:res Sigma}).

We can have $m=4$ and $\Sigma$ a complete intersection, so the bound of Corollary \ref{C-Sigma ic} is sharp.

From the point of view of the jacobian ideal to get a curve $C$ with $\tau = 1$ we may argue as follows. Let $\mathbb{P}$ denote the blowing-up of $\Ptw$ at a point. We have $\mathbb{P} = \mathbb{F}_1 := \mathbb{P}(\oc _{\Pun} \oplus \oc _{\Pun} (1))$ (see for ex. \cite{Beau}). Denote by $h,f$ the classes of $\oc _{\mathbb{F}_1}(1)$ and of a fiber in $Pic(\mathbb{F}_1 )$. We have $h^2 = 1 = hf, f^2=0$. The exceptional divisor is $E = h-f$. For any $a \geq 1$, the linear system $|ah + 2f|$ contains a smooth irreducible curve, $C'$, such that $C'.E = 2$. The image of $C'$ in $\Ptw$ is a curve, $C$, of degree $a+2$ with $\tau (C)=1$ (for $a=1$ $C$ is a nodal cubic).

Other examples with $m=4$ and $\Sigma$ complete intersection can be obtained by taking $C = A\cup B$ where $A,B$ are smooth curves, of degrees $a,b$, intersecting transversally. We have $d = a+b$, $\tau = ab$ and $\Sigma$ is a complete intersection $(a,b)$. Assume $a \geq 2$ then, arguing as above, we get $d_1 = d-2, d_2 = d_3 = d_4 = d-1$, $b_1 = d+a-2, b_2= d+b-2$ and the corresponding resolution of $H^0_*(E)$ is minimal. 
\end{remark}

Another consequence of Lemma \ref{L-ait}:

\begin{corollary}
\label{C-d<=d1+di}
With notations as above (in particular $m \geq 3$) we have:\\
(i) $d_1+d_i \geq d + m-3, \forall i \geq 2$\\ 
(ii) $Z$ is a complete intersection if and only if $m=3$. In that case $Z$ is a complete intersection of type $(d_1+d_2-d+1, d_1+d_3-d+1)$.
\end{corollary}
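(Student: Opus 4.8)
The plan is to extract everything from the minimal free resolution (\ref{eq:rlm Z}) of $H^0_*(\ic_Z)$ produced in Lemma \ref{L-resolutions}(iii), and then feed it into the numerical constraint of Lemma \ref{L-ait}. First I would record that, by (\ref{eq:rlm Z}), the ideal of the zero-dimensional subscheme $Z$ of (\ref{eq:Z}) is minimally generated by the $m-1$ forms of degrees $d_i - d + 1 + d_1$, $i = 2, \dots, m$, subject to $m-2$ syzygies of degrees $b_j - d + 1 + d_1$, $j = 1, \dots, m-2$. The key bookkeeping point is that the number of minimal generators is $m-1$, so in the notation of Lemma \ref{L-ait} we have $t+1 = m-1$, i.e. $t = m-2$; this uses $m \geq 3$, which guarantees $\ic_Z \neq 0$, i.e. that $Z$ is a proper (nonempty, by (\ref{eq:deg Z})) zero-dimensional subscheme.

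For (i) I would then simply apply Lemma \ref{L-ait} to the resolution (\ref{eq:rlm Z}): every generator degree $a_i = d_i - d + 1 + d_1$ satisfies $a_i \geq t = m-2$, which is exactly $d_1 + d_i \geq d + m - 3$ for all $i \geq 2$.

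For (ii), a zero-dimensional subscheme of $\Ptw$ is a complete intersection if and only if its minimal free resolution has exactly two minimal generators (Hilbert--Burch); by the count above this holds precisely when $m - 1 = 2$, i.e. $m = 3$. When $m = 3$, (\ref{eq:rlm Z}) specializes to
\[
0 \to \oc(-b_1 + d - 1 - d_1) \to \oc(-d_2 + d - 1 - d_1) \oplus \oc(-d_3 + d - 1 - d_1) \to \ic_Z \to 0,
\]
exhibiting $Z$ as a complete intersection of its two generator degrees, that is of type $(d_1 + d_2 - d + 1,\ d_1 + d_3 - d + 1)$; one may check consistency by noting that then $b_1 = d_2 + d_3 - d + 1$ is forced by the Koszul relation and that the product of the two types matches $\deg Z$ via (\ref{eq:deg Z}).

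The argument is essentially pure bookkeeping on resolutions, and I do not expect a genuine obstacle; the only place that demands a moment's care is the generator count for $\ic_Z$ (it is $m-1$, not $m$, one generator having been absorbed as the section $\oc \hookrightarrow E(d_1)$ in (\ref{eq:Z})), so that the exponent $t$ appearing in Lemma \ref{L-ait} is $m-2$ rather than $m-1$.
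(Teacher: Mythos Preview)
Your argument is correct and follows exactly the paper's own route: apply Lemma \ref{L-ait} to the minimal resolution (\ref{eq:rlm Z}) of $\ic_Z$ from Lemma \ref{L-resolutions}(iii), noting that it has $m-1$ generators (so $t=m-2$), and read off both the inequality (i) and the complete-intersection criterion (ii). The only cosmetic remark is that the nonemptiness of $Z$ is better justified by the contrapositive ``$Z=\emptyset \Rightarrow E$ splits $\Rightarrow m=2$'' than by (\ref{eq:deg Z}) alone.
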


\begin{proof} (i) This follows from (\ref{eq:rlm Z}) and Lemma \ref{L-ait}.\\
(ii) Follows from (iii) of Lemma \ref{L-resolutions}.
\end{proof}

\begin{remark} Part (i) is proved also in \cite{DS2} and (ii) is Prop. 3.1. of \cite{DS}. The proofs are different. 

If $m=3$ and $d_1 + d_2 = d$, following \cite{DS} one says that $C$ is a \emph{plus one generated curve}. We see that $C$ is a plus one generated curve if and only if $Z$ (of degree $d_3-d_2+1$) is contained in a line. We recover the fact that $C$ is \emph{nearly free} (i.e. $Z$ is a point) if, moreover, $d_3 = d_2$.
\end{remark}

%*****************************************
\section{Around the extremal cases in du Plessis-Wall's theorem.}

We recall the bound given by du Plessis-Wall (\cite{du}, see \cite{E-Tju} for a different proof, valid also for q.c.i.): $(d-1)(d-1-d_1) \leq \tau \leq (d-1)(d-1-d_1) + d_1^2$.

\begin{theorem}
\label{P-d3<d} With notations as above:\\
(i) if $m \geq 3$, we have $d_1 \leq d_2 \leq d_3 \leq d-1$.\\
(ii) We have $d+1 \geq m$.\\
(iii) We have $(d-1)(d-1-d_1) = \tau$ if and only if $\Sigma$ is a complete intersection of type $(d-1, d-1-d_1)$. In this case $m=3$ and $d_2=d_3 = d-1$.\\
(iv) Assume $\tau = (d-1)(d-1-d_1)+1$. If $\tau > 1$, then $m=4$ and $\{ d_i\} =\{d_1, d-1, d-1, d-3+d_1\}$ or $d_1 = 1, m=2$ and $E$ splits like $\oc (-1) \oplus \oc (d-2)$.
\end{theorem}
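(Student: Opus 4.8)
The plan is to extract everything from the two resolutions already at our disposal: the resolution (\ref{eq:res Sigma}) of $\ic_\Sigma$ coming from Proposition \ref{P-res Sigma}, and the resolution (\ref{eq:rlm Z}) of $\ic_Z$ together with the degree formula (\ref{eq:deg Z}) and the du Plessis--Wall bound. For (i), I would argue by contradiction: if $d_3 \geq d$, then, after all possible cancellations of $3.\oc(1-d)$ terms against $\oc(d_i - 2d+2)$ terms, at least one of the $\oc(d_i - 2d+2)$ with $i \in \{1,2,3\}$ survives in the minimal free resolution of $\ic_\Sigma$ (since only copies of $\oc(1-d)$, i.e.\ indices with $d_i = d-1$, can cancel, and if $d_3 \geq d$ the surviving degrees $d_i - 2d+2$ for $i=1,2,3$ are $\le -d$). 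But $\ic_\Sigma \subset \oc$ is generated in degree $\le d-1$ (by (\ref{eq:C}), the partials themselves), so Lemma \ref{L-ait} applied to the minimal free resolution of $\ic_\Sigma$ forces all the relation twists $2d-2-b_j$ and $2d-2-d_i$ to be at least the number of generators; combined with $h^0(\ic_\Sigma(d-1)) \ne 0$ and Lemma \ref{L-ait}'s second assertion, one bounds the number of generators and hence the $b_j$, contradicting the survival of a relation of degree $\ge 2d - (d-1) \cdot\!$ wait --- more directly: a relation of degree $2d-2-d_i$ with $d_i \ge d$ means degree $\le d-2$, so there are at least two generators of $\ic_\Sigma$ of degree $\le d-3$, giving $h^0(\ic_\Sigma(d-3)) \ne 0$; then $\deg\Sigma = \tau \le \binom{d-1}{2}$ forces, via the du Plessis--Wall lower bound $\tau \ge (d-1)(d-1-d_1)$, that $d_1 \ge d-2$, whence $d_3 \ge d_1 \ge d-2$ is compatible only in a narrow range that one checks directly leads to a contradiction or to $d_3 = d-1$.

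For (ii), $d+1 \geq m$: I would feed (i) into Corollary \ref{C-d<=d1+di}(i), which gives $d_1 + d_i \geq d + m - 3$ for all $i \geq 2$; taking $i=2$ and using $d_2 \le d-1$ from (i) yields $d_1 \ge m-2$, and taking $i$ with $d_i \le d-1$ (which holds for $i \le 3$, and we may as well push to the largest such index) together with $d_1 \le d_2 \le d-1$ gives $2(d-1) \ge d+m-3$, i.e.\ $m \le d+1$. (One must be slightly careful that $m \ge 4$ is the interesting case; $m=3$ is immediate.)

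For (iii), the extremal du Plessis--Wall case: equality $\tau = (d-1)(d-1-d_1)$ plugged into (\ref{eq:deg Z}) gives $\deg Z = d_1^2 - d_1(d-1) + (d-1)^2 - (d-1)(d-1-d_1) + d_1^2 = d_1^2 + \cdots$; I would compute this and show $\deg Z = d_1^2$... actually one finds $\deg Z = d_1(1-d) + (d-1)d_1 + d_1^2 = d_1^2$ --- wait, $(d-1)^2 - \tau = (d-1)^2 - (d-1)(d-1-d_1) = (d-1)d_1$, so $\deg Z = d_1(1-d) + (d-1)d_1 + d_1^2 = d_1^2$. Hmm, that is $\deg Z = d_1^2$, but from (\ref{eq:Z}) the scheme $Z$ lies on a curve of degree $2d_1+1-d$; if $2d_1 + 1 - d \le 0$ then $Z = \emptyset$, forcing $d_1^2 = 0$, i.e.\ $d_1 = 0$ and $E$ has a section without zeros, i.e.\ $m = 2$, contradicting our standing hypothesis unless we record it as the degenerate branch. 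If $2d_1+1-d \ge 1$, a zero-dimensional scheme of degree $d_1^2$ on a curve of degree $2d_1+1-d$: I would show this forces $d_1^2 \le$ something forcing $Z$ to be a complete intersection, hence $m=3$ by Corollary \ref{C-d<=d1+di}(ii), and then back-substitute into (\ref{eq:rlm Z}) and (\ref{eq:res Sigma}) to read off $d_2 = d_3 = d-1$ and that $\Sigma$ is the complete intersection $(d-1, d-1-d_1)$. The converse (if $\Sigma$ is a c.i.\ of that type then equality holds) is a direct degree count: $\tau = \deg\Sigma = (d-1)(d-1-d_1)$.

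For (iv), the sub-extremal case $\tau = (d-1)(d-1-d_1)+1$: the same substitution into (\ref{eq:deg Z}) gives $\deg Z = d_1^2 - 1 = (d_1-1)(d_1+1)$. A zero-dimensional l.c.i.\ scheme of degree $(d_1-1)(d_1+1)$ lying on a curve of degree $2d_1+1-d$ (from (\ref{eq:Z})): the main case is that $Z$ is a complete intersection of type $(d_1-1, d_1+1)$ or $(d_1+1,d_1-1)$, which by Corollary \ref{C-d<=d1+di}(ii) again gives $m=3$ --- but then one computes the exponent and finds a contradiction with $\tau$ not being extremal, pushing us to $m=4$; alternatively $Z$ fails to be a c.i., and I would use Lemma \ref{L-ait} on (\ref{eq:rlm Z}) to pin down that the only possibility is $m=4$ with resolution twists forcing $\{d_i\} = \{d_1, d-1, d-1, d_1 + d - 3\}$, after excluding the degenerate branch $d_1 = 1$, $m=2$, $E \cong \oc(-1) \oplus \oc(d-2)$ (which occurs exactly when $\deg Z = 0$, i.e.\ $d_1 = 1$). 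The bookkeeping between the $b_j$ and the $d_i$ via (\ref{eq:res E}) and the Chern class identity $\sum b_j - \sum d_i = c_1(E) = 1-d$, $\sum_{i<j} d_id_j - \sum b_j(\text{something}) = c_2$, is what nails the exact exponent.

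The main obstacle I anticipate is (iii) and especially (iv): translating "a zero-dimensional l.c.i.\ scheme of degree $N$ sitting on a plane curve of small degree $e$" into "$Z$ is forced to be a complete intersection" is not automatic --- it requires either $N = e \cdot e'$ with the scheme spanning, or a Cayley--Bacharach / Lemma \ref{L-ait}-type argument on the resolution (\ref{eq:rlm Z}) of $\ic_Z$ to rule out extra generators. I would handle this by applying Lemma \ref{L-ait} directly to (\ref{eq:rlm Z}): the hypothesis pins $h^0(\ic_Z(2d_1+1-d)) \ne 0$, so $\ic_Z$ is generated by at most $2d_1 - d + 2$ elements, i.e.\ $m - 1 \le 2d_1 - d + 2$; combined with (i), (ii) and the degree of $Z$ this is tight enough to leave only the listed exponents. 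The degenerate $m=2$ branches in (iii) and (iv) must be tracked carefully since they sit just outside the "$m \ge 3$" standing assumption and appear precisely when $Z = \emptyset$.
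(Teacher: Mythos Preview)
Your proposal has genuine gaps in parts (i), (iii), and (iv); part (ii) is fine and matches the paper.

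\textbf{Part (i).} Your argument via the resolution (\ref{eq:res Sigma}) does not close. From a surviving syzygy of degree $\le d-2$ you correctly deduce $h^0(\ic_\Sigma(d-3))\ne 0$, but the next step, ``$\tau\le\binom{d-1}{2}$'', is not justified (a curve of degree $d-3$ through $\Sigma$ gives no such bound on its own), and even the Bezout-type bound $\tau\le(d-1)(d-3)$ you could legitimately obtain, when fed into the du Plessis--Wall inequality $(d-1)(d-1-d_1)\le\tau$, yields only $d_1\ge 2$, not $d_1\ge d-2$. So the contradiction never materializes. The paper takes an entirely different route: it uses the three Koszul syzygies $K_x,K_y,K_z\in H^0(E(d-1))$ and the Euler-type relation $f_zK_z-f_yK_y+f_xK_x=0$ among them. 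If $d_3\ge d$, all three $K$'s are combinations of $g_1,g_2$ only; the relation then produces a nontrivial identity $Ag_1=Bg_2$, which after clearing common factors contradicts the minimality of $d_1$. This syzygy argument is the missing idea.

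\textbf{Parts (iii) and (iv).} The computation $\deg Z=d_1^2$ (resp.\ $d_1^2-1$) is correct, but your key claim ``from (\ref{eq:Z}) the scheme $Z$ lies on a curve of degree $2d_1+1-d$'' is a misreading of the exact sequence: (\ref{eq:Z}) gives $h^0(\ic_Z(2d_1+1-d))=h^0(E(d_1))-1$, which is zero whenever $d_2>d_1$. In fact in the situation of (iii) the conclusion is $d_2=d_3=d-1>d_1$, so precisely $h^0(\ic_Z(2d_1+1-d))=0$ and your hypothesis fails. The paper instead links $\Sigma$ to $\Gamma$ by a complete intersection $(d-1,d-1)$, so that $\deg\Gamma=d_1(d-1)$ (resp.\ $d_1(d-1)-1$); then $\Gamma$ is a section of $E(d-1)$ and one sees directly that $h^0(\ic_\Gamma(d_1))\ne 0$, forcing $\Gamma$ to be a complete intersection $(d_1,d-1)$ (resp.\ linked to one point by such). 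Mapping cone back to $\Sigma$ then reads off the resolution and the exponents. Working with $\Gamma$ rather than $Z$ is what makes the containment-in-a-low-degree-curve step automatic.
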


\begin{proof} (i) Let us denote by $g_1, g_2, g_3$ the generators of degrees $d_1, d_2, d_3$ of $H^0_*(E)$. We will coonsider the $g_i$'s as relations among the partials.

Consider the Koszul relations: $K_z = (f_y, -f_x,0)$, $K_y = (f_z, 0, -f_x)$, $K_x = (0, f_z, -f_y)$. We have:
\begin{equation}
\label{eq:relK}
f_zK_z -f_yK_y + f_xK_x =0
\end{equation}
The relations $K_x, K_y, K_z$ correspond to sections $s_x, s_y, s_z$ of $E(d-1)$. It follows that $d_1 \leq d-1$. We also clearly have $d_2 \leq d-1$. Indeed otherwise $K_x, K_y, K_z$ are multiple of $g_1$, which is impossible ($P.(u_1, v_1, w_1) = (f_y, -f_x, 0)$ implies $w_1=0$ and going on this way we get $g_1=0$). If $d_3 \geq d$, these sections are combinations of $g_1, g_2$ only. Now (\ref{eq:relK}) yields a relation involving only $g_1$ and $g_2$. We claim that this relation is non trivial.

Indeed let $s_x = ag_1+bg_2$, $s_y = a'g_1+b'g_2$, $s_z = a''g_1+b''g_2$. Then (\ref{eq:relK}) becomes: $g_1(af_x -a'f_y+a''f_z) + g_2(bf_x -b'f_y+b''f_z) =0$. Assume $af_x -a'f_y+a''f_z=0$ and $bf_x-b'f_y +b''f_z=0$. Then $\aG = (a,-a',a'')$ is a section of $E(d-1-d_1)$ and $\bG = (b, -b', b'')$ is a section of $E(d-1-d_2)$. Since $d-1-d_2 \leq d_1 -1$ (Corollary \ref{C-d<=d1+di}), we get $\bG =0$, hence $b=b'=b''=0$. Since $d-1-d_1 \leq d_2 -1$ (Corollary \ref{C-d<=d1+di}), we see that $\aG$ is a multiple of $g_1$: $(a, -a', a'') = P.(u_1,v_1,w_1)$, where $g_1 = (u_1,v_1,w_1)$. It follows that $a =Pu_1$. Moreover $s_x = (0, f_z,-f_y) = ag_1 = (Pu_1^2, Pu_1v_1, Pu_1w_1)$ and it follows that $Pu_1=0 =a$, hence $s_x=0$, which is impossible.

So we have a non trivial relation $Ag_1 = Bg_2$. We may assume $(A,B)=1$ (otherwise just divide by the common factors). It follows that $B$ divides every components $u_1, v_1, w_1$ of $g_1$ and we get a relation $(u'_1, v'_1, w'_1)$ of degree $< d_1$, against the minimality of $d_1$. We conclude that $d_3 \leq d-1$.

(ii) From (i) we have $2d-2 \geq d_1 + d_3$. We conclude with Corollary \ref{C-d<=d1+di}. 

(iii) Assume $\tau = (d-1)(d-1-d_1)$. Since $\ic _\Sigma (d-1)$ is generated by global sections we can link $\Sigma$ to a subscheme $\Gamma$ by a complete intersection $F \cap G$ of type $(d-1, d-1)$. Clearly $\deg (\Gamma )= (d-1)^2 -\tau = d_1(d-1)$. By mapping cone we have (after simplifications): $0 \to \oc \to E(d-1) \to \ic _\Gamma (d-1) \to 0$. Twisting by $1-d+d_1$ we get: $0 \to \oc (1-d+d_1) \to E(d_1) \to \ic _\Gamma (d_1) \to 0$. Since $\tau > 0$, $d_1 < d-1$, hence $h^0(\ic _\Gamma (d_1)) \neq 0$. It follows that $\Gamma$ is contained in a complete intersection $(d_1, d-1)$. Indeed the base locus of the linear system of curves of degree $d-1$ containing $\Gamma$ has dimension zero and $d_1 < d-1$. For degree reasons $\Gamma$ is a complete intersection $(d_1, d-1)$ and we have $0 \to \oc (1-d-d_1) \to \oc (-d_1) \oplus \oc (1-d) \to \ic _\Gamma \to 0$. By mapping cone again: $0 \to \oc (1-d) \oplus \oc (d_1-2d+2) \to \oc (d_1+1-d) \oplus 2.\oc (1-d) \to \ic _\Sigma \to 0$. We claim that we can cancel the repeated term $\oc (1-d)$. Indeed, since $\dim (F\cap G)=0$, we may assume that $F$ or $G$ is not a multiple of, $S$, the curve of degree $d_1$ containing $\Gamma$, hence $F$ or $G$ is a minimal generator of $H^0_*(\ic _\Gamma )$. It follows that $\Sigma$ is a complete intersection. We conclude with Proposition \ref{P-res Sigma}.

Conversely if $\Sigma$ is a complete intersection $(d-1, d-1-d_1)$, from Proposition \ref{P-res Sigma} we get $m=3$ and $d_2 = d_3 = d-1$.

(iv) We argue as above. The assumption $\tau > 1$ makes sure that $h^0(\ic _\Gamma (d_1)) \neq 0$. This time we find that $\Gamma$ is linked to one point by a complete intersection $(d-1, d_1)$. By mapping cone we get: $0 \to 2.\oc (-d-d_1 +2) \to \oc (-d-d_1+3)\oplus \oc (-d_1)\oplus \oc (-d+1) \to \ic _\Gamma \to 0$. This resolution is minimal except if $d_1 = 1$ in which case we have: $0 \to \oc (1-d) \to \oc (2-d)\oplus \oc (-1) \to \ic _\Gamma \to 0$. As we have seen above $\Gamma = (s)_0$ where $s \in H^0(E(d-1))$. If $s$ is a minimal generator of $H^0_*(E)$, then $H^0_*(\ic _Z)$ has $m-1$ minimal generators, otherwise it has $m$ minimal generators. So if $d_1 > 1$, $3 \leq m \leq 4$. By mapping cone we go back to $\Sigma$. If $d_1 > 1$ we get: $0 \to \oc (-d+d_1-1) \oplus \oc (-2d+2+d_1) \to 2.\oc (-d+d_1)\oplus \oc (1-d) \to \ic _\Sigma \to 0$. From Proposition \ref{P-res Sigma} we conclude that $m=4$ and $\{d_i\} = \{d_1, d-1, d-1, d-3+d_1 \}$. If $d_1 =1$, by mapping cone we get $0 \to \oc (-2d+3)\oplus \oc (-d) \to 3.\oc (1-d) \to \ic _\Sigma \to 0$. This resolution is minimal. Hence $m=2$ and $E$ splits like $\oc (-d+2)\oplus \oc (-1)$.    
\end{proof}

\begin{remark} See \cite{DS} for a different proof of part (i). Point (ii) is proved in \cite{DS2}.\\
Since the minimal free resolution of sets of points of low degree are known (see for example \cite{E2} for a list), the analysis above can be extended to the cases $\tau = (d-1)(d-1-d_1)+x$, for small $x$.
\end{remark}

 It is easy to show that if $\tau$ reaches the upper-bound in the first part of du Plessis-Wall's theorem, then $E$ splits (because $c_2(E(d_1))=0$ and $h^0(E(d_1)) \neq 0$) i.e. $\Sigma$ is an almost complete intersection (or $C$ is a \emph{free} curve). However there is a second part in du Plessis-Wall's theorem: under the assumption $2d_1 +1 > d$ (which amounts to say that $E$ is stable), we have a better upper-bound: $\tau \leq \tau _+ := (d-1)(d-1-d_1) + d_1^2 - \frac{1}{2}(2d_1 +1-d)(2d_1+2-d)$. Notice that this holds true also for q.c.i. (\cite{E-Tju}).

In \cite{DS2} Thm. 3.1, the authors prove that this bound is reached if and only if we have:
\begin{equation}
\label{eq:res E max}
0 \to (m-2).\oc (-d_1-1) \to m.\oc (-d_1) \to E \to 0
\end{equation}

with $m = 2d_1 -d+3$.

This can be proved as follows. From the exact sequence (\ref{eq:Z}) we have $h^0(\ic _Z(2d_1 -d))=0$ (observe that $Z \neq \emptyset$ because $2r+1 > d$). It follows that $\deg (Z) \geq h^0(\oc (2d_1-d))$. The assumption $\tau = \tau _+$ implies (use \ref{eq:deg Z}) that we have equality: $\deg (Z) = h^0(\oc (2d_1-d))$. This implies $h^1(\ic _Z(2d_1-d))=0$. It follows (Castelnuovo-Mumford's lemma or numerical character) that the minimal free resolution of $\ic _Z$ is: $0 \to s.\oc (-s-1) \to (s+1).\oc (-s) \to \ic _Z\to 0$, with $s = 2d_1-d+1$. We conclude with Lemma \ref{L-resolutions}.

Conversely if we have (\ref{eq:res E max}), by Lemma \ref{L-resolutions} we get that $\ic _Z$ has a linear resolution and $\deg (Z) = h^0(\oc (2d_1-d))$. This implies $\tau = \tau _+$.

\medskip

Then the authors ask (\cite{DS2} Conjecture 1.2)) if for any integer $d \geq 3$ and for any integer $r$, $d/2 \leq r \leq d-1$, there exists $\Sigma$ with $d_1 = r$ and $\tau = \tau _+$. I don't know the answer in general but, in the framework of q.c.i., the answer is yes:

\begin{proposition}
\label{P-tau+}
With notations as above, for every $d \geq 3$ and for every integer $r$, $d/2 \leq r \leq d-1$, there exists a q.c.i. subscheme $\Sigma \subset \Ptw$, of degree $\tau _+$, with $d_1 = r$
\end{proposition}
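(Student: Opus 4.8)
The plan is to build the rank two bundle $E$ first, by a general matrix construction, and only afterwards realize $\Sigma$ as a q.c.i. of type $(d-1,d-1,d-1)$. Write $r$ for the desired $d_1$ and set $m := 2r-d+3$; the hypothesis $d/2 \leq r$ forces $m \geq 3$. Take a general graded map $\fG\colon \bigoplus_1^{m-2}\oc(-r-1) \to \bigoplus_1^m \oc(-r)$, that is, a general $m\times(m-2)$ matrix of linear forms. For $\fG$ general the locus of $\Ptw$ where $\mathrm{rk}\,\fG \leq m-3$ has expected codimension $3$, hence is empty, so $\fG$ is a subbundle inclusion and its cokernel $E$ is a rank two vector bundle. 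From the resolution one reads $c_1(E) = m-2r-2 = 1-d$, and a direct Chern class computation (equivalently, Lemma \ref{L-resolutions} together with the discussion of (\ref{eq:res E max}) above) gives $c_2(E) = (d-1)^2 - \tau _+$. Moreover, since $\fG$ has linear entries, the induced resolution $0 \to \bigoplus_1^{m-2}S(-r-1)\to \bigoplus_1^m S(-r) \to H^0_*(E)\to 0$ is minimal of the shape (\ref{eq:res E max}); thus $H^0_*(E)$ is minimally generated by $m$ elements of degree $r$, i.e. $d_1 = r$.

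It remains to present $\Sigma$ via an exact sequence $0 \to E \to 3.\oc \to \ic_\Sigma(d-1)\to 0$ as in (\ref{eq:C}). First I would note that $E^\vee \simeq E(d-1)$ is globally generated: twisting the resolution of $E$ by $d-1$ exhibits $E(d-1)$ as a quotient of $\bigoplus_1^m \oc(d-1-r)$, and $d-1-r\geq 0$. A short count with the resolution shows $h^0(E^\vee)\geq 3$, so we may pick three general sections of $E^\vee$ and form the induced map $\psi\colon E \to 3.\oc$. The crucial point is that, $E^\vee$ being globally generated, three general sections of $E^\vee$ span a subspace of rank $\leq 1$ only in codimension $\geq 2$: a general section $s_1$ of the rank two globally generated bundle $E^\vee$ vanishes on a finite set, on whose complement $E^\vee/\oc\cdot s_1$ is a globally generated line bundle $L$; the images in $L$ of two further general sections are general members of a base-point-free linear system, hence their zero divisors meet in codimension $\geq 2$. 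Thus $\psi$ drops rank only in codimension $\geq 2$, so $\mathrm{coker}\,\psi$ is torsion free of rank one with reflexive hull $\oc(d-1)$, i.e. $\mathrm{coker}\,\psi = \ic_\Sigma(d-1)$ for a zero-dimensional $\Sigma$. This is the required q.c.i. presentation of type $(d-1,d-1,d-1)$, and $\deg\Sigma = (d-1)^2 - c_2(E) = \tau _+$, while $d_1 = r$ as established.

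The main obstacle is precisely the genericity statement used in the second paragraph: one must check carefully, via a Bertini / general position argument, that three general sections of the globally generated bundle $E^\vee$ on $\Ptw$ fail to span rank two only along a codimension two locus, so that the cokernel of $E \to 3.\oc$ is genuinely torsion free (otherwise a curve would appear as an extra component of the ``$\Sigma$''). Everything else — the codimension count making $\mathrm{coker}\,\fG$ locally free, the Chern class bookkeeping, minimality of the resolution, the inequality $m\geq 3$ — is routine. The degenerate extremes $m=3$ (which happens exactly when $d=2r$) and $r=d-1$ (where $m=d+1$, the maximal value allowed by Theorem \ref{P-d3<d}) should be inspected separately, but the same argument applies verbatim.
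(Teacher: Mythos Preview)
Your argument is correct, and it reaches the same destination as the paper but by a noticeably more direct path. The paper first builds a zero-dimensional scheme $Z$ with linear resolution $0 \to s.\oc(-s-1)\to (s+1).\oc(-s)\to \ic_Z\to 0$ (with $s=2r-d+1=m-2$), obtains a bundle $\ec$ from $Z$ via the Serre/Cayley--Bacharach construction, takes a section $\Gamma$ of $\ec(a)$, and then produces $\Sigma$ by linking $\Gamma$ through a complete intersection $(d-1,d-1)$; the mapping cone of that liaison is exactly the sequence $0\to E\to 3.\oc\to \ic_\Sigma(d-1)\to 0$ with $E=\ec(-r)$. You instead manufacture $E$ in one stroke as the cokernel of a generic $m\times(m-2)$ linear matrix (which is the same bundle, by Lemma~\ref{L-resolutions}(iii)) and then obtain $\Sigma$ directly from three general sections of $E^\vee$. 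In effect, the paper's single section of $E(d-1)$ together with the two linking hypersurfaces \emph{are} three sections of $E^\vee$, so the two constructions coincide at the level of objects; what differs is the packaging. Your route trades the liaison/Serre machinery for the Bertini-type statement you flag: that for a globally generated rank two bundle on $\Ptw$, three general sections drop rank only in codimension $\geq 2$. Your sketch of this (restrict to the complement of $(s_1)_0$, reduce to two general members of the base-point-free system $|\ic_{(s_1)_0}(d-1)|$ having no common component, and note that the finite set $(s_1)_0$ itself contributes nothing divisorial) is adequate. The paper's approach, in contrast, gets the codimension-two conclusion for free from the liaison formalism.
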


\begin{proof} Let us consider a zero-dimensional subscheme with a linear resolution:
\begin{equation}
0 \to s.\oc (-s-1) \to (s+1).\oc (-s) \to \ic _Z \to 0
\end{equation}
Since the Cayley-Bachararch condition CB$(s-3)$ is obviouslu satisfied we may associate a rank two vector bundle to $\ic _Z(s)$: $0 \to \oc \to \ec \to \ic _Z(s) \to 0$. We have $c_1(\ec )=s$ and $c_2(\ec ) = s(s+1)/2 = \deg (Z)$. Since $h^1(\oc )=0$ and $\ic _Z(s)$ and $\oc$ are globally generated, $\ec$ also is globally generated. For $a \geq 0$ let us consider a section of $\ec (a)$: $0 \to \oc \to \ec (a) \to \ic _\Gamma (2a+s) \to 0$. For $k \geq a+s$, $\ic _\Gamma (k)$ is globally generated and we can link $\Gamma$ to $\Sigma$ by a complete intersection of type $(k,k)$. By mapping cone we get, if $k = 2a+s$:
\begin{equation}
0 \to \ec (-3a-2s) \to 3.\oc(-2a-s) \to \ic _\Sigma \to 0
\end{equation}
We have $c_2(\ec (a))= as +s(s+1)/2 + a^2 = \deg (\Gamma )$. It follows that $\tau := \deg (\Sigma ) = 3a^2 +3as + s(s-1)/2$. Since $d_1 = a+s$ ($E := \ec (-a-s)$), it is easy to check that $\tau = \tau _+$.

Let $d$ be an integer. Assume $d$ odd, $d = 2\dG +1$. For $1 \leq \rho \leq \dG$, set $a = \dG -\rho$, $s=2\rho$, $d_1 =a+s$ and $d= 2a+s+1$. Then the construction above yields $\Sigma$ of degree $\tau _+$, q.c.i. of three curves of degree $d-1$, with $d_1 = a+s$. We have $\dG +1 \leq d_1 \leq 2\dG$.

If $d = 2\dG$, for $0 \leq \rho \leq \dG -1$, set $a = \dG -\rho -1$ and $s = 2\rho +1$ ($d_1 = a+s$). 
\end{proof}

\begin{remark} It is not clear at all that there are examples with $\Sigma$ a jacobian set. For some partial results see \cite{DS2}, section 4.
\end{remark} 

It is possible to give a little improvement, namely:

\begin{proposition}
\label{P-lin moins un}
Assume $2d_1 + 1 > d$ and $\tau = \tau _+ -1$. Set $s := 2d_1 -d$. Then we have two possibilities:\\
(a) The minimal free resolution of $\ic _Z$ is:
\begin{equation}
\label{eq:lin -1 res a}
0 \to \oc (-s-2)\oplus (s-2).\oc (-s-1) \to s.\oc (-s) \to \ic _Z \to 0
\end{equation}
In this case $m = 2d_1 -d +1$ and $d_i = d_1, \forall i $.\\

(b) The minimal free resolution of $\ic _Z$ is:
\begin{equation}
\label{eq:lin -1 res b}
0 \to \oc (-s-2)\oplus (s-1).\oc (-s-1) \to \oc (-s-1)\oplus s.\oc (-s) \to \ic _Z \to 0
\end{equation}
In this case $m = 2d_1 -d +2$ and $d_i = d_1, 2 \leq i < m, d_m = d_1+1$.
\end{proposition}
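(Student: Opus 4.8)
The plan is to transfer the statement onto the zero‑dimensional scheme $Z$ of the exact sequence (\ref{eq:Z}) (with $E=\ec(-d_1)$), determine first the Hilbert function and then the graded Betti numbers of $\ic_Z$, and finally read $m$ and the $d_i$ back off through Lemma \ref{L-resolutions}(iii) and (\ref{eq:res E}). Write $s:=2d_1-d+1$, the integer of the extremal case treated above, so that $\deg Z=\binom{s+1}{2}$ when $\tau=\tau_+$ and the extremal resolution of $\ic_Z$ reads $0\to s\cdot\oc(-s-1)\to(s+1)\cdot\oc(-s)\to\ic_Z\to0$. From (\ref{eq:deg Z}) and the formula for $\tau_+$ one gets $\deg Z=\binom{s+1}{2}+1$, one point more than in the extremal case. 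From (\ref{eq:Z}) twisted by $-1$ and the minimality of $d_1$ (so $h^0(E(d_1-1))=0$) one gets $h^0(\ic_Z(s-1))=0$; hence the Hilbert function of $Z$ equals $\binom{k+2}{2}$ for all $k\le s-1$ and $h^1(\ic_Z(s-1))=1$. Now the first difference $\Delta h_Z$ is the Hilbert function of an Artinian standard graded algebra (a general hyperplane section of the Cohen--Macaulay ring $S/I_Z$), so it vanishes identically once it vanishes; since it begins $1,2,\dots,s$ and sums to $\deg Z=(1+2+\cdots+s)+1$, it is forced to be $(1,2,\dots,s,1,0,0,\dots)$. In particular $h^1(\ic_Z(s))=0$, so $\ic_Z$ is $(s+1)$-regular and generated in degrees $s$ and $s+1$ only, with exactly $h^0(\ic_Z(s))=s$ generators in degree $s$.

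By Hilbert--Burch the minimal free resolution of $\ic_Z$ has the form $0\to F_1\to F_0\to\ic_Z\to0$ with $\mathrm{rk}\,F_0=\mathrm{rk}\,F_1+1$; let $\beta\ge0$ be the number of its degree-$(s+1)$ generators, so $F_0=\oc(-s)^{\oplus s}\oplus\oc(-s-1)^{\oplus\beta}$. Equating the total twist of $F_0$ with that of $F_1$ (i.e.\ $c_1(\ic_Z)=0$) and imposing $\deg Z=\binom{s+1}{2}+1$ against the Hilbert function just found forces $F_1=\oc(-s-1)^{\oplus(s+\beta-2)}\oplus\oc(-s-2)$. Thus everything is controlled by the single integer $\beta$, and via Lemma \ref{L-resolutions}(iii) this translates into: $\beta=\#\{i:d_i=d_1+1\}$ (all other $d_i$ equal $d_1$), $m=s+1+\beta$, while the $b_j$ are $s+\beta-2$ copies of $d_1+1$ and one copy of $d_1+2$.

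The crux, and the only step that genuinely uses that $E$ is a vector bundle (the Betti table above is numerically consistent for every $\beta\ge0$), is to prove $\beta\le1$. Feed the exponents and the $b_j$ into the minimal resolution (\ref{eq:res E}): relative to the generator-degrees $d_1,d_1+1$ and the relation-degrees $d_1+1,d_1+2$, its first syzygy matrix has block shape $M=\left(\begin{smallmatrix}L&Q\\0&\ell\end{smallmatrix}\right)$ with $L$ an $(s+1)\times(s+\beta-2)$ matrix of linear forms, $Q$ a column of quadrics, $\ell$ a column of $\beta$ linear forms, and lower-left block $0$ by minimality. Since $E$ is locally free of rank $2$, $M$ must have full column rank $s+\beta-1$ at every point of $\Ptw$. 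For $\beta\ge4$ this fails already at a general point, where the rank is at most $\mathrm{rk}\,L+1\le s+2<s+\beta-1$. For $\beta=3$, $L$ is square $(s+1)\times(s+1)$: if the $\ell_j$ have a common zero the rank drops there, and if not then $\det L$ would be a nowhere-vanishing form of positive degree on $\Ptw$; both are impossible. For $\beta=2$, the rank condition forces $L$ to be a subbundle inclusion $\oc(-d_1-1)^{\oplus s}\hookrightarrow\oc(-d_1)^{\oplus(s+1)}$ with line-bundle cokernel $\oc(d_1-d+1)$; eliminating $L$ from $M$ then yields $0\to\oc(-d_1-2)\to\oc(d_1-d+1)\oplus\oc(-d_1-1)^{\oplus2}\to E\to0$, whence $h^0(E(d-d_1-1))\ne0$ --- impossible since $d-d_1-1<d_1$ by the hypothesis $2d_1+1>d$. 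Hence $\beta\in\{0,1\}$: $\beta=0$ gives resolution (a) and $\beta=1$ gives resolution (b), both minimal because (\ref{eq:res E}), hence (\ref{eq:rlm Z}), is; the stated values of $m$ and of the $d_i$ follow at once. I expect the whole difficulty to be concentrated in this last paragraph --- the numerics do not see $\beta$ --- and the elimination of the case $\beta=2$ is precisely where the stability hypothesis $2d_1+1>d$ is used.
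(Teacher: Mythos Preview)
Your argument is correct and, up through the determination of the Betti table of $\ic_Z$ modulo the single parameter $\beta$, it parallels the paper's proof (you use the Artinian first difference of $h_Z$ where the paper invokes $\beta^+-3=\max\{k:h^1(\ic_Z(k))\neq0\}$, but these are equivalent here). The genuine divergence is in the key step $\beta\le 1$. You transport the problem back to the resolution (\ref{eq:res E}) of $E$ and exploit that the syzygy matrix must have pointwise rank $m-2$ everywhere, running a case analysis on $\beta=2,3,\ge4$; the $\beta=2$ case is then killed by producing a section of $E$ in degree $d-1-d_1<d_1$, i.e.\ by stability. The paper instead argues directly on the Hilbert--Burch matrix of $\ic_Z$: for $\beta\ge 2$ there are two rows supported only in the single column of degree $s+2$, hence deleting any one of the remaining rows yields a vanishing maximal minor, contradicting that the maximal minors are the (nonzero) generators. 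This is shorter, uniform in $\beta$, and does not invoke stability at that point---the hypothesis $2d_1+1>d$ is used only to make $s\ge1$ and to have $h^0(\ic_Z(s-1))=0$. Your route, on the other hand, makes the role of local freeness of $E$ explicit and shows concretely how $\beta=2$ would force a destabilizing section; both viewpoints are valid, but the Hilbert--Burch shortcut is worth knowing. A small technical remark: in your $\beta=2$ elimination, the passage to $0\to\oc(-d_1-2)\to\oc(d_1-d+1)\oplus\oc(-d_1-1)^{2}\to E\to0$ tacitly uses that the extension $0\to\oc(d_1-d+1)\to\mathrm{coker}(L)\to\oc(-d_1-1)^{2}\to0$ splits; this is fine since $H^1(\oc(2d_1-d+2))=0$, but it deserves a word. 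Finally, note the harmless notational shift: you (and in fact the paper's own proof) work with $s=2d_1-d+1$, while the statement sets $s:=2d_1-d$; your conclusions match the statement once this offset is taken into account.
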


\begin{proof} Arguing exactly as above this time we have $\deg Z  = h^0(\oc (s-1)) +1$, $h^0(\ic _Z(s-1))=0$, hence $h^1(\ic _Z(s-1))=1$. Let $0 \to \bigoplus ^t \oc (-\bG _j) \to \bigoplus ^{t+1} \oc (-\aG _i) \to \ic _Z \to 0$ denote the minimal free resolution of $\ic _Z$. Since $\bG ^+ > \aG ^+$ ($\bG ^+ = max\{\bG _j\}$ and the same for $\aG ^+$) and since $\bG ^+ -3 = max\{k \mid h^1(\ic _Z(k)) \neq 0\}$, we see that $\bG ^+ = s+2$ (with coefficient equal to 1 because $h^1(\ic _Z(s-1))=1$). It follows that $H^0_*(\ic _Z)$ is generated in degrees $\leq s+1$. Of course we have $s$ minimal generators of degree $s$ and in general nothing else (it is easy to produce examples for any $s$). We conclude that in this case the resolution is like in (a).

What about generators of degree $s+1$ ? If there at least two such generators, then the matrix of the resolution has two rows of the form $(L, 0, ..., 0)$. By erasing another row, we get a maximal minor which is zero, but this is impossible (the maximal minors are the generators). So there is at most one generator of degree $s+1$. In this case the resolution is like in (b). Examples exist for any $s$: take $s+1$ points on a line and the remaining ones in general position.
\end{proof}

%^^^^^^^^^^^^^^^^^^^^^^^^^^^

%*****************************************
%*****************************************

%\printindex

\end{document}